\documentclass[a4paper,10pt]{article}
\usepackage[utf8x]{inputenc}
 \usepackage[T1]{fontenc} 
 \usepackage[finnish,english]{babel}

\usepackage{amsmath}
\usepackage{amsthm}
\usepackage{amssymb}
\usepackage{mathtools}
\usepackage{epsfig}
\usepackage{psfrag}
\usepackage{graphicx} 
\usepackage{caption} 
\usepackage{subcaption}

\newtheorem{theorem}{Theorem}[section]

\newtheorem{remark}[theorem]{Remark}

%opening
\title{Hyperbolic triangular buildings without periodic planes of genus two}
\author{Riikka Kangaslampi, Alina Vdovina}

\begin{document}
     
\maketitle

\begin{abstract}
  We study surface subgroups of groups acting simply transitively on
  vertex sets of certain hyperbolic triangular buildings. The study is
  motivated by Gromov's famous surface subgroup question: Does every
  one-ended hyperbolic group contain a subgroup which is isomorphic to
  the fundamental group of a closed surface of genus at least 2? In
  \cite{KV2} and \cite{CKV} the authors constructed and classified all
  groups acting simply transitively on the vertices of hyperbolic
  triangular buildings of the smallest non-trivial thickness. These
  groups gave the first examples of cocompact lattices acting simply
  transitively on vertices of hyperbolic triangular Kac-Moody
  buildings that are not right-angled. Here we study surface subgroups
  of the 23 torsion free groups obtained in \cite{KV2}.  With the help
  of computer searches we show, that in most of the cases there are no
  periodic apartments invariant under the action of a genus two
  surface. The existence of such an action implies the existence of a
  surface subgroup, but it is not known, whether the existence of a
  surface subgroup implies the existence of a periodic
  apartment. These groups are the first candidates for groups that
  have no surface subgroups arising from periodic apartments.

\end{abstract}

\section{Introduction}

In \cite{KV2} the authors classified all torsion-free groups acting
simply transitively on the vertices of hyperbolic triangular buildings
of the smallest non-trivial thickness. They constructed the groups
with the polygonal presentation method introduced in
\cite{Vdovina}. As a result, they obtain 23 non-isomorphic groups,
each defined by 15 generators $x_1,x_2,\ldots, x_{15}$ and 15 cyclic
relations, each of them of the form $x_ix_jx_k=1$, where not all the
indices are the same. The underlying hyperbolic building is the
universal cover of the polyhedron glued together from 15 geodesic
triangles with angles $\pi/4$ and with the letters from the relations
written on the boundary. In constructing the polyhedron the sides of
the triangles with the same labels are glued together, respecting the
orientation. For example, the presentations $T_1$, $T_3$, $T_9$ and $T_{21}$
obtained in \cite{KV2} are given in Table \ref{presT}. These will be
used later as examples.

\begin{table}[h!]
\centering
    \begin{tabular}{| l | l | l | l |}
    \hline
   $T_1$ & $T_{3}$ & $T_9$ & $T_{21}$\\
    \hline
$(x_1,x_1,x_{10})$&$(x_1, x_{1},x_{10})$ &$(x_1, x_{1},x_{10})$&$(x_1, x_{5},x_{2})$\\
$(x_1,x_{15},x_2)$&$(x_1, x_{15}, x_{2})$&$(x_1, x_{15},x_{2})$&$(x_4, x_{13}, x_{11})$\\
$(x_2,x_{11},x_9)$&$(x_2, x_{11}, x_{3})$&$(x_2, x_{11},x_{4})$&$(x_1, x_{6}, x_{4})$\\
$(x_2,x_{14},x_3)$&$(x_2, x_{14}, x_{5})$&$(x_2, x_{14},x_{6})$&$(x_5, x_{9}, x_{10})$\\
$(x_3,x_7,x_4)$&$(x_3, x_{7}, x_{4})$&$(x_3, x_{5},x_{9})$&$(x_1, x_{3}, x_{13})$\\
$(x_3,x_{15},x_{13})$&$(x_3, x_{15}, x_{8})$&$(x_3, x_{8},x_{7})$&$(x_5, x_{13}, x_{9})$\\
$(x_4,x_8,x_6)$&$(x_4, x_{8}, x_{9})$&$(x_3, x_{10},x_{13})$&$(x_2, x_{7}, x_{10})$\\
$(x_4,x_{12},x_{11})$&$(x_4, x_{12}, x_{12})$&$(x_4, x_{8},x_{5})$&$(x_6, x_{9}, x_{8})$\\
$(x_5,x_5,x_8)$&$(x_5, x_{9}, x_{6})$&$(x_4, x_{14},x_{14})$&$(x_2, x_{12}, x_{15})$\\ 
$(x_5,x_{10},x_{12})$&$(x_5, x_{13}, x_{13})$&$(x_5, x_{10},x_{12})$&$(x_6, x_{11}, x_{10})$\\
$(x_6,x_6,x_{14})$&$(x_6, x_{8}, x_{11})$&$(x_6, x_{7},x_{12})$&$(x_3, x_{11}, x_{14})$\\
$(x_7,x_7,x_{12})$&$(x_6, x_{10}, x_{13})$&$(x_6, x_{15},x_{9})$&$(x_7, x_{8}, x_{15})$\\
$(x_8,x_{13},x_{9})$&$(x_7, x_{9}, x_{14})$&$(x_7, x_{8},x_{11})$&$(x_3, x_{14}, x_{8})$\\ 
$(x_9,x_{14},x_{15})$&$(x_{7}, x_{10}, x_{12})$&$(x_9, x_{15},x_{13})$&$(x_{7}, x_{14}, x_{12})$\\
$(x_{10},x_{13},x_{11})$&$(x_{11}, x_{15}, x_{14})$&$(x_{11}, x_{12},x_{13})$&$(x_4, x_{12}, x_{15})$\\
    \hline
    \end{tabular}
      \caption{Presentations $T_1$, $T_3$, $T_{9}$ and $T_{21}$ from \cite{KV2}.}
      \label{presT}
    \end{table}

%\begin{table}[h!]
%  \centering
%  \begin{tabular}{| l | l |}
%    \hline
%    $T_1$ & $T_{21}$\\
%    \hline
%    $(x_1,x_1,x_{10})$&$(x_1, x_{5},x_{2})$\\
%    $(x_1,x_{15},x_2)$&$(x_4, x_{13}, x_{11})$\\
%    $(x_2,x_{11},x_9)$&$(x_1, x_{6}, x_{4})$\\
%    $(x_2,x_{14},x_3)$&$(x_5, x_{9}, x_{10})$\\
%    $(x_3,x_7,x_4)$&$(x_1, x_{3}, x_{13})$\\
%    $(x_3,x_{15},x_{13})$&$(x_5, x_{13}, x_{9})$\\
%    $(x_4,x_8,x_6)$&$(x_2, x_{7}, x_{10})$\\
%    $(x_4,x_{12},x_{11})$&$(x_6, x_{9}, x_{8})$\\
%    $(x_5,x_5,x_8)$&$(x_2, x_{12}, x_{15})$\\ 
%    $(x_5,x_{10},x_{12})$&$(x_6, x_{11}, x_{10})$\\
%    $(x_6,x_6,x_{14})$&$(x_3, x_{11}, x_{14})$\\
%    $(x_7,x_7,x_{12})$&$(x_7, x_{8}, x_{15})$\\
%    $(x_8,x_{13},x_{9})$&$(x_3, x_{14}, x_{8})$\\ 
%    $(x_9,x_{14},x_{15})$&$(x_{7}, x_{14}, x_{12})$\\
%    $(x_{10},x_{13},x_{11})$&$(x_4, x_{12}, x_{15})$\\
%    \hline
%  \end{tabular}
%  \caption{Presentations $T_1$ and $T_{21}$ from \cite{KV2}.}
%  \label{T1T21}
%\end{table}

    Thus these sets of 15 triangles, with angles $\pi/4$, words
    specified in \cite{KV2} written at the boundary and glued together
    respecting orientation, all give a polyhedron that has one vertex
    and the smallest generalised quadrangle as the link.  The
    universal cover of this polyhedron is a hyperbolic triangular
    building \cite{Ballmann-Brin1995}, and the group with 15
    generators $x_1,x_2,\ldots, x_{15}$ and the 15 words from the
    boundaries of the triangles as relations, acts on the building
    cocompactly and simply transitively.

These groups are the first examples of cocompact lattices acting
simply transitively on vertices of hyperbolic triangular Kac-Moody
buildings that are not right-angled. For a general introduction to the
theory of hyperbolic buildings and their lattices, see the survey
\cite{thomas} by A.~Thomas.

Here we study the 23 groups further, motivated by Gromov's famous
surface subgroup question: Does every one-ended hyperbolic group
contain a subgroup which is isomorphic to the fundamental group of a
closed surface of genus at least 2? Recall, that a group $G$ is a {\em
  surface group} if $G=\pi_1(\mathcal{F})$, where $\mathcal{F}$ is a
closed surface. If in addition $\mathcal{F}$ is finite, then
$\pi_1(\mathcal{F})$ has one of the following forms (see
\cite{stillwell})
\begin{enumerate}
\item[(i)] $G=\langle a_1,b_1,\ldots, a_n,b_n\; |\:
  a_1b_1a_1^{-1}b_1^{-1}\cdots a_nb_na_n^{-1}b_n^{-1}\rangle$, when
  $\mathcal{F}$ is orientable and of genus $n$,
\item[(ii)] $G=\langle a_1,\ldots, a_n\; | \: a_1^2a_2^2\cdots
  a_n^2\rangle$, when $\mathcal{F}$ is non-orientable and of genus
  $n$.
\end{enumerate}
 
Gromov's question remains open, but there are many classes of
hyperbolic groups, for which the answer is positive. For example, see
\cite{crisp-sageev-sapir} for surface subgroups of right-angled Artin
groups or \cite{calegari}, where Calegari and Walker show, that a
random group contains many quasiconvex surface subgroups. Existence of
surface subgroups in right-angled hyperbolic buildings was shown in
\cite{futer-thomas}, and in hyperbolic buildings with 4-gonal
apartments in \cite{Vdovina2005}. Existence of surface subgroups in
fundamental groups of higher-dimensional complexes is discussed, for
example, in \cite{reid}.

We are especially interested in periodic apartments, invariant under
an action of a surface group, since such an action implies an
existence of a surface subgroup.  For periodic apartments in Euclidean
buildings using dynamics, see \cite{Ballmann-Brin1995}. In
\cite{Vdovina2005}, periodic apartments were shown to exist in some
hyperbolic buildings.

It is not known, whether the existence of a surface subgroup implies
the existence of a periodic apartment. In this paper we obtain first
candidates for groups not having periodic apartments. We show, that
one cannot find an apartment invariant under a genus two surface group
action in most of the considered 23 groups.

\section{Periodic apartments of genus 2}

\begin{theorem}\label{thm_1}
  There are hyperbolic triangular buildings admitting
  simply-transitive torsion free action and having the smallest
  generalised quadrangle as the link at each vertex that do no have
  any apartments invariant under genus 2 orientable surface group action.
\end{theorem}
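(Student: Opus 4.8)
The plan is to reduce the theorem to a finite, computer‑verifiable search. The statement asserts the existence of at least one building with no genus‑2 periodic apartment, so it suffices to exhibit one of the 23 groups of \cite{KV2} for which an exhaustive search finds no invariant apartment. Since the buildings and their links are already fixed by the constructions recalled above, I would spend no effort on the geometry of the building itself and instead concentrate on translating "apartment invariant under a genus 2 orientable surface group action" into a combinatorial condition on the generators $x_1,\dots,x_{15}$ and the 15 triangular relations.

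First I would make precise what a periodic apartment amounts to. An apartment in a triangular building is a copy of the hyperbolic plane tiled by $(\pi/4,\pi/4,\pi/4)$ triangles, i.e.\ a $(4,4,4)$ triangle tiling, and the genus‑2 surface group from case (i) of the introduction, $\langle a_1,b_1,a_2,b_2 \mid a_1b_1a_1^{-1}b_1^{-1}a_2b_2a_2^{-1}b_2^{-1}\rangle$, must act on this tiled plane cocompactly with quotient a closed genus‑2 surface built from these triangles. The key step is therefore to count: an octagon‑based genus‑2 surface tiled by $\pi/4$ triangles, with the correct angle sum $2\pi$ at each interior vertex (eight triangles meeting at a point, since $8\cdot \pi/4 = 2\pi$), must be assembled from a fixed number of the building's triangles, and the edge labels read off the boundary of this tiled surface must spell words that are consistent with the group relations, i.e.\ each generator $x_i$ and its glued partners must close up. I would set up this bookkeeping as a system: choose a triangulated genus‑2 surface with the right number of triangles and vertices (determined by Euler characteristic, $\chi = -2$), assign to each directed edge one of the generators subject to the triangle relations $x_ix_jx_k=1$, and require the link condition at the single vertex type to match the smallest generalised quadrangle.

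Next I would turn this into the actual search. The finiteness comes from the Euler‑characteristic count fixing the number of triangles in a genus‑2 fundamental domain, so there are only finitely many labelings to check; for each of the 23 presentations one enumerates all ways of gluing the required number of the 15 triangle types into a closed genus‑2 surface respecting the link (generalised quadrangle) constraint at vertices and the orientation conventions, and checks whether any such gluing yields a legitimately embedded apartment, i.e.\ develops into a genuine flat plane $(4,4,4)$ tiling inside the building. The existence claim of the theorem only needs one presentation to survive this enumeration with an empty result, so I would run the search and report the negative outcome, exhibiting (for instance) one of $T_1,T_3,T_9,T_{21}$ as witnessing the theorem.

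The main obstacle I anticipate is not the logic but the combinatorial explosion and the correctness of the reduction: one must be certain that ``no invariant apartment found in the finite search'' really is equivalent to ``no genus‑2 periodic apartment exists,'' which requires proving that any such apartment must arise from a fundamental domain of the bounded size dictated by $\chi=-2$ and the fixed angle $\pi/4$, and that the link condition at the vertex is faithfully captured by the generalised quadrangle incidence. I would therefore prove a lemma bounding the size of a minimal genus‑2 periodic fundamental domain and showing that the development of any admissible labeled surface into the building is forced, so that the search is genuinely exhaustive; once that reduction is secured, the remaining work is a terminating computer enumeration whose empty output for at least one group establishes the theorem.
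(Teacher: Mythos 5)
Your proposal follows essentially the same route as the paper: use the Euler characteristic to pin the quotient surface down to 16 triangles (six octagons meeting three per vertex), reduce the existence of an invariant apartment to a finite enumeration of labelled gluings consistent with the 15 triangle relations, and let a computer search return empty for at least one of the 23 presentations. One caveat: of your suggested witnesses, $T_1$ and $T_9$ in fact \emph{do} admit genus-2 periodic apartments (the paper finds colourings for $T_1, T_2, T_7, T_9, T_{18}$), so the witness must come from the remaining 18, e.g.\ $T_3$ or $T_{21}$, and your enumeration must not overlook surfaces in which a triangle is glued to a neighbour along two of its sides (double edges in the dual graph), a case the paper treats separately and which produces some of the colourable examples.
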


\begin{proof}
  Let us assume, that there exists an action of genus 2 surface on an
  apartment of the building. Let's consider the triangulation of the
  surface induced by this action and take the dual graph of this
  triangulation.  It has a vertex for each triangle of which the
  surface is glued together, and edge between two vertices, if the
  corresponding triangles are adjacent. Thus the dual graph is
  3-valent. Since the triangles have angles $\pi/4$, eight of them
  must meet at any vertex of the surface. It means that in the dual
  graph there are cycles of length eight, or, in other words, we can
  think about the surface also as being glued together from octagons
  (see Figure \ref{dualgraph}). Note that the same triangle can appear
  more than once in an octagon, so we should in fact talk about closed
  walks of length 8, but, for simplicity, let us call them
  8-cycles. Since the edges in the dual graph each correspond to an
  edge in the triangulation, the labelling of the edges of the
  triangles with $x_1,\ldots, x_{15}$ corresponds to a colouring of
  the edges in the dual graph.

  \begin{figure}[h!]
    \begin{center}
      \psfrag{xa}{$x_i$} \psfrag{xb}{$x_j$} \psfrag{xc}{$x_k$}
      \includegraphics[width=0.8\textwidth]{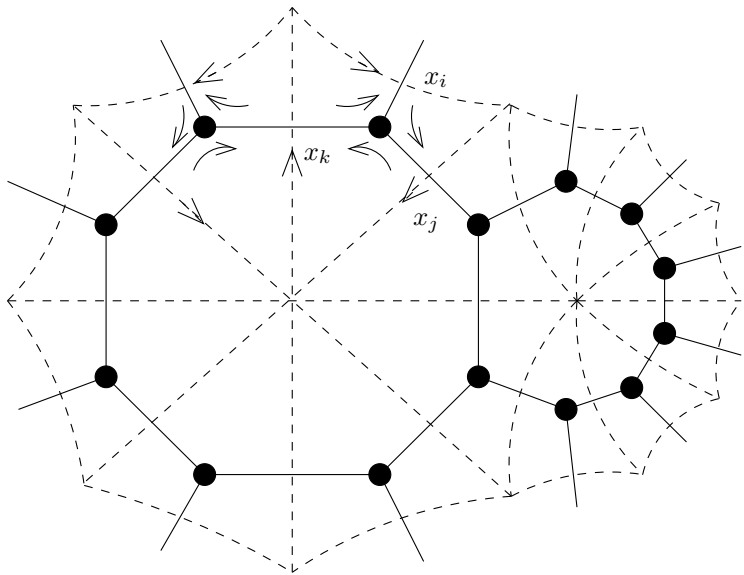}
      \caption{The edges of the dual graph get their labels from the
        sides of the triangles.}
      \label{dualgraph}
    \end{center}
  \end{figure}

  From Euler's formula $V-E+F=2-2g$ we can now deduce the number of
  triangles in a surface of genus 2: The surface is tessellated by
  regular octagons. Three of them meet at each vertex, since the dual
  graph is 3-valent, and each edge is shared by two octagons. Thus if
  we denote the number of octagons by $F$, the number of edges is $4F$
  and the number of vertices is $8F/3$, and we get
  \begin{equation}\label{faces}
    F=6g-6.
  \end{equation}
  So, for a genus 2 surface we need 6 octagons, and thus 16
  triangles. The dual graph therefore has 16 vertices and 24 edges.

  Since the triangles are oriented, they induce an orientation to the
  vertices of the dual graph (Figure \ref{dualgraph}). Two adjacent
  triangles have different orientation, and so also two adjacent
  vertices in the dual graph have different orientation. Thus the dual
  graph must be bipartite. Denote, that the vertices of the 8-cycles
  at the boundaries of the octagons have alternating orientations.

  It is possible that some of the triangles forming the surface are
  glued together from two sides. This means that there is a double
  edge between two vertices in the dual graph. However, let us first
  consider the dual graphs with no double edges. Such graphs have
  girth at least 3.
 
  \subsection{Dual graphs without double edges}
 
  From Gordon Royle's list of cubic graphs \cite{Gordon_web} we see
  that there exist 4060 cubic graphs with 16 vertices, 24 edges and
  girth 3 or more. We generate all of these using \texttt{nauty}
  \cite{nauty}. Only 38 of them are bipartite. By a computer programme
  written in Fortran we check the existence of six 8-cycles in these
  graphs with a depth first algorithm as follows.

  We pick one of the vertices as the starting point for the
  search. There has to be three octagons through this vertex. So we
  can pick any of the adjacent ones to be another vertex in the first
  octagon. Then we proceed along the graph, not visiting the same
  vertex twice, until we arrive to the eighth vertex. If this the one
  we started from, we have an octagon. If not, we go back one step at
  the time, trying all the other possible ways to proceed from the
  previous vertex. We keep track of everything we have tried. When one
  octagon is found, we proceed searching for another 5. Each edge in
  the graph must be used in two different octagons, once to each
  direction.

  Only the graphs that have numbers 3345, 3538, 3621, 4002 and 4060
  when all the 4060 are generated with \texttt{nauty} are bipartite
  and have 6 cycles of length 8 in them, and thus only these five
  graphs fulfill the above conditions for a dual graph of a surface of
  genus 2. Let us call these graphs $G^0_{3345}$, $G^0_{3538}$,
  $G^0_{3621}$, $G^0_{4002}$ and $G^0_{4060}$. Note, that the set of
  six octagons is not unique in any of these five graphs: in the graph
  $G^0_{3345}$ there are 8 ways to pick a set of six octagons with the
  desired properties. In the graph $G^0_{3538}$ there are 2 ways to
  pick the set, in $G^0_{3621}$ 6 ways, in $G^0_{4002}$ 48 ways and in
  the graph $G^0_{4060}$ 18 ways to pick the set of octagons.

  One set of six octagons in the $G^0_{3345}$ (Figure \ref{graph3345})
  is given in Table \ref{octagons1} as a list of vertices.  The cycles
  induce an orientation to the vertices, i.e. following the arrows in
  Figure \ref{graph3345} we obtain the octagons. This orientation has
  to be reversed in every other vertex to obtain the orientation of
  the triangles centered at these vertices of the dual
  graph. Obviously, all the cycles can also be taken to the opposite
  orientation, resulting to the opposite orientation at all vertices.

      \begin{figure}[h!]
        \begin{center}
          \psfrag{1}{$1$} \psfrag{2}{$2$} \psfrag{3}{$3$}
          \psfrag{4}{$4$} \psfrag{5}{$5$} \psfrag{6}{$6$}
          \psfrag{7}{$7$} \psfrag{8}{$8$} \psfrag{9}{$9$}
          \psfrag{10}{$10$} \psfrag{11}{$11$} \psfrag{12}{$12$}
          \psfrag{13}{$13$} \psfrag{14}{$14$} \psfrag{15}{$15$}
          \psfrag{16}{$16$}
          \includegraphics[width=0.80\textwidth]{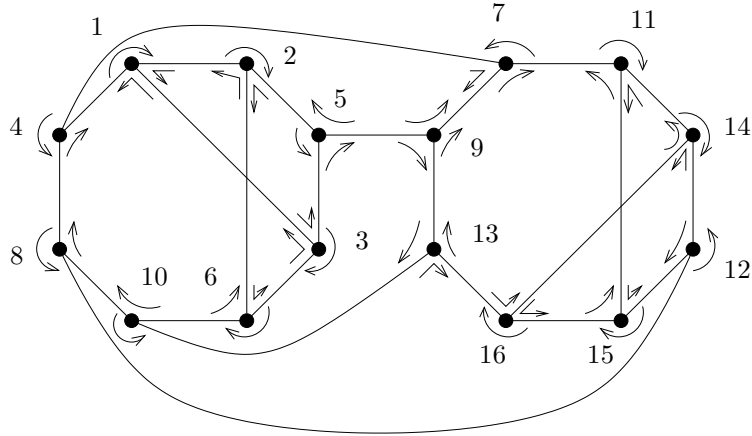}
          \caption{Graph $G^0_{3345}$ with orientations that give six
            8-cycles.}
          \label{graph3345}
        \end{center}
      \end{figure}

      \begin{table}[h!]
        \centering
        \begin{tabular}{| l | l |}
          \hline
          Graph $G^0_{3345}$ & Graph $G^0_{3538}$\\
          \hline
          $(1, 2, 5, 3, 6, 10, 8, 4)$ &$(1, 2, 5, 3, 7, 10, 6, 4)$\\
          $(1, 3, 5, 9, 13, 10,  6, 2)$&$(1, 3, 5, 9, 8, 4, 6, 2)$\\
          $(1, 4, 7, 9, 5, 2, 6, 3)$ &$(1, 4, 8, 12, 15, 11, 7, 3)$\\
          $(4, 8, 12, 14, 16, 15, 11, 7)$ &$(2, 6, 10, 14, 16, 13, 9, 5)$\\
          $(7, 11, 14, 12, 15, 16, 13, 9)$ &$(7, 11, 13, 16, 15, 12, 14, 10)$\\
          $(8, 10, 13, 16, 14, 11, 15, 12)$&$(8, 9, 13, 11, 15, 16, 14, 12)$\\
          \hline
        \end{tabular}
        \caption{Examples of a set of six octagons in the graphs $G^0_{3345}$ and $G^0_{3538}$.}
        \label{octagons1}
      \end{table}
  
      \begin{figure}[h!]
        \begin{center}
          \psfrag{1}{$1$} \psfrag{2}{$2$} \psfrag{3}{$3$}
          \psfrag{4}{$4$} \psfrag{5}{$5$} \psfrag{6}{$6$}
          \psfrag{7}{$7$} \psfrag{8}{$8$} \psfrag{9}{$9$}
          \psfrag{10}{$10$} \psfrag{11}{$11$} \psfrag{12}{$12$}
          \psfrag{13}{$13$} \psfrag{14}{$14$} \psfrag{15}{$15$}
          \psfrag{16}{$16$}
          \includegraphics[width=0.7\textwidth]{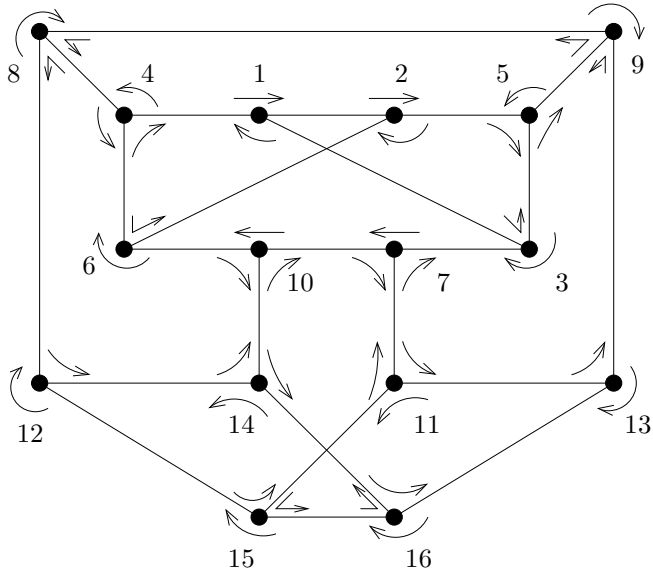}
          \caption{Graph $G^0_{3538}$ with orientations that give six
            8-cycles.}
          \label{graph3538}
        \end{center}
      \end{figure}

      A set of cycles of length 8 in the graph 3538 (Figure
      \ref{graph3538}) are presented in Table \ref{octagons1} and sets
      of cycles in the graphs $G^0_{3621}$ (Figure \ref{graph3621}),
      $G^0_{4002}$ (Figure \ref{graph4002}) and $G^0_{4060}$ (Figure
      \ref{graph4060}) are given in Table \ref{octagons2}. Also in
      these figures is denoted the orientation of the vertices that
      corresponds to the octagons, and the orientations of the
      triangles are obtained reversing the orientation at every other
      vertex.
  
      \begin{table}[h!]
        \centering
        \begin{tabular}{| l | l | l |}
          \hline
          Graph $G^0_{3621}$ & Graph $G^0_{4002}$ & Graph 4060\\
          \hline
          $(1,2,5,9,14,10,6,4)$ &$(1,2,6,11,15,13,7,3)$ &  $(1,2,6,14,9,12,8,3)$\\
          $(1,3,7,11,8,4,6,2)$ &$(1,3,5,10,15,11,8,4)$&$(1,3,7,11,5,12,9,4)$\\
          $(1,4,8,13,15,9,5,3)$ &$(1,4,9,12,16,10,5,2)$&$(1,4,10,15,8,12,5,2)$\\
          $(2,6,10,16,12,7,3,5)$, &$(2,5,3,7,14,16,12,6)$ & $(2,5,11,16,15,10,13,6)$\\
          $(7,12,15,13,16,10,14,11)$ &$(4,8,13,15,10,16,14,9)$&$(3,8,15,16,14,6,13,7)$\\
          $(8,11,14,9,15,12,16,13)$&$(6,12,9,14,7,13,8,11)$&$(4,9,14,16,11,7,13,10)$\\
          \hline
        \end{tabular}
        \caption{Examples of a set of 6 octagons in the graphs $G^0_{3621}$, $G^0_{4002}$ and $G^0_{4060}$.}
        \label{octagons2}
      \end{table}
  
      \begin{figure}[h!]
        \begin{center}
          \psfrag{1}{$1$} \psfrag{2}{$2$} \psfrag{3}{$3$}
          \psfrag{4}{$4$} \psfrag{5}{$5$} \psfrag{6}{$6$}
          \psfrag{7}{$7$} \psfrag{8}{$8$} \psfrag{9}{$9$}
          \psfrag{10}{$10$} \psfrag{11}{$11$} \psfrag{12}{$12$}
          \psfrag{13}{$13$} \psfrag{14}{$14$} \psfrag{15}{$15$}
          \psfrag{16}{$16$}
          \includegraphics[width=0.70\textwidth]{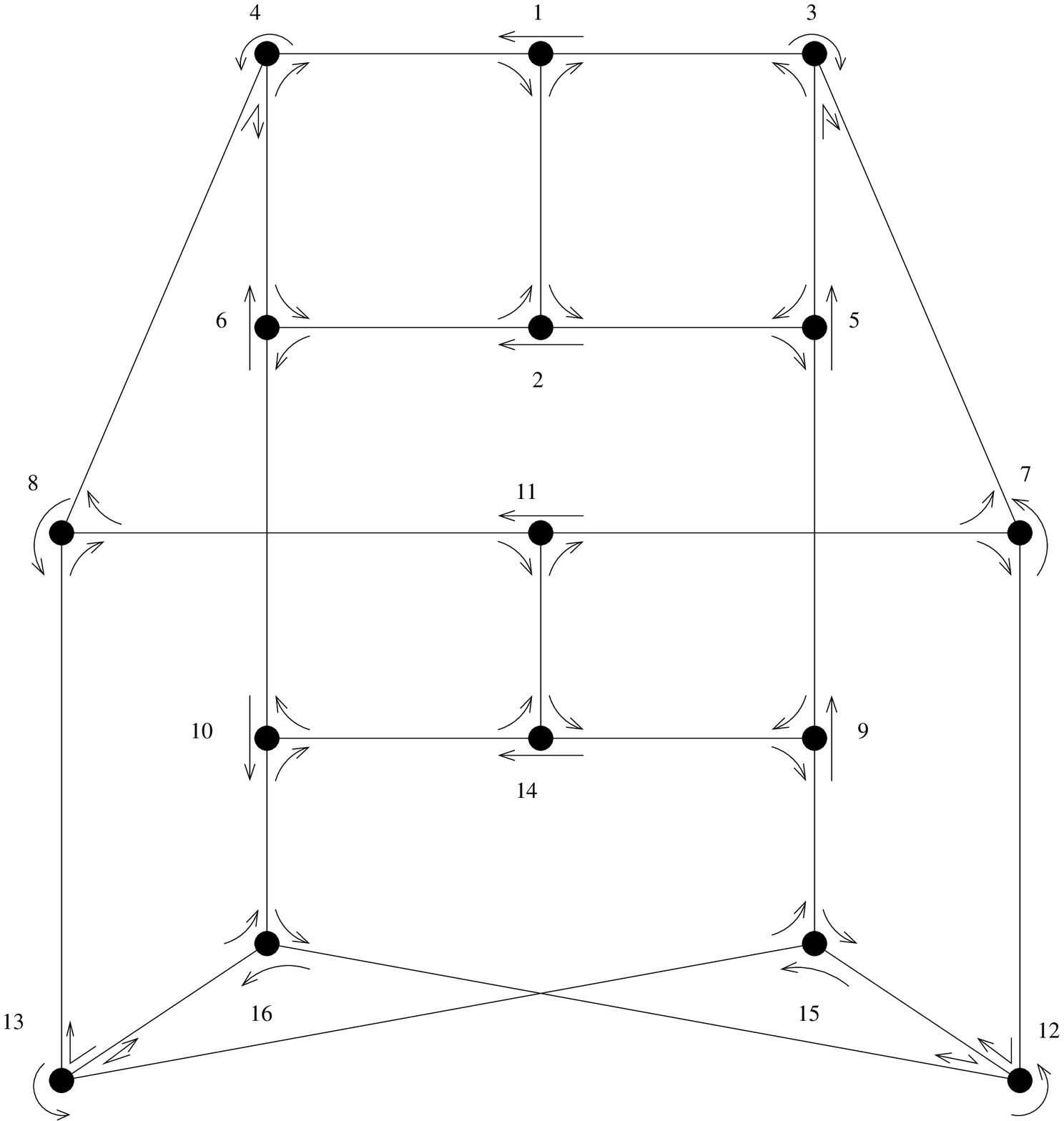}
          \caption{Graph $G^0_{3621}$ with orientations that give six
            8-cycles.}
          \label{graph3621}
        \end{center}
      \end{figure}

      \begin{figure}[h!]
        \begin{center}
          \psfrag{1}{$1$} \psfrag{2}{$2$} \psfrag{3}{$3$}
          \psfrag{4}{$4$} \psfrag{5}{$5$} \psfrag{6}{$6$}
          \psfrag{7}{$7$} \psfrag{8}{$8$} \psfrag{9}{$9$}
          \psfrag{10}{$10$} \psfrag{11}{$11$} \psfrag{12}{$12$}
          \psfrag{13}{$13$} \psfrag{14}{$14$} \psfrag{15}{$15$}
          \psfrag{16}{$16$}
          \includegraphics[width=0.85\textwidth]{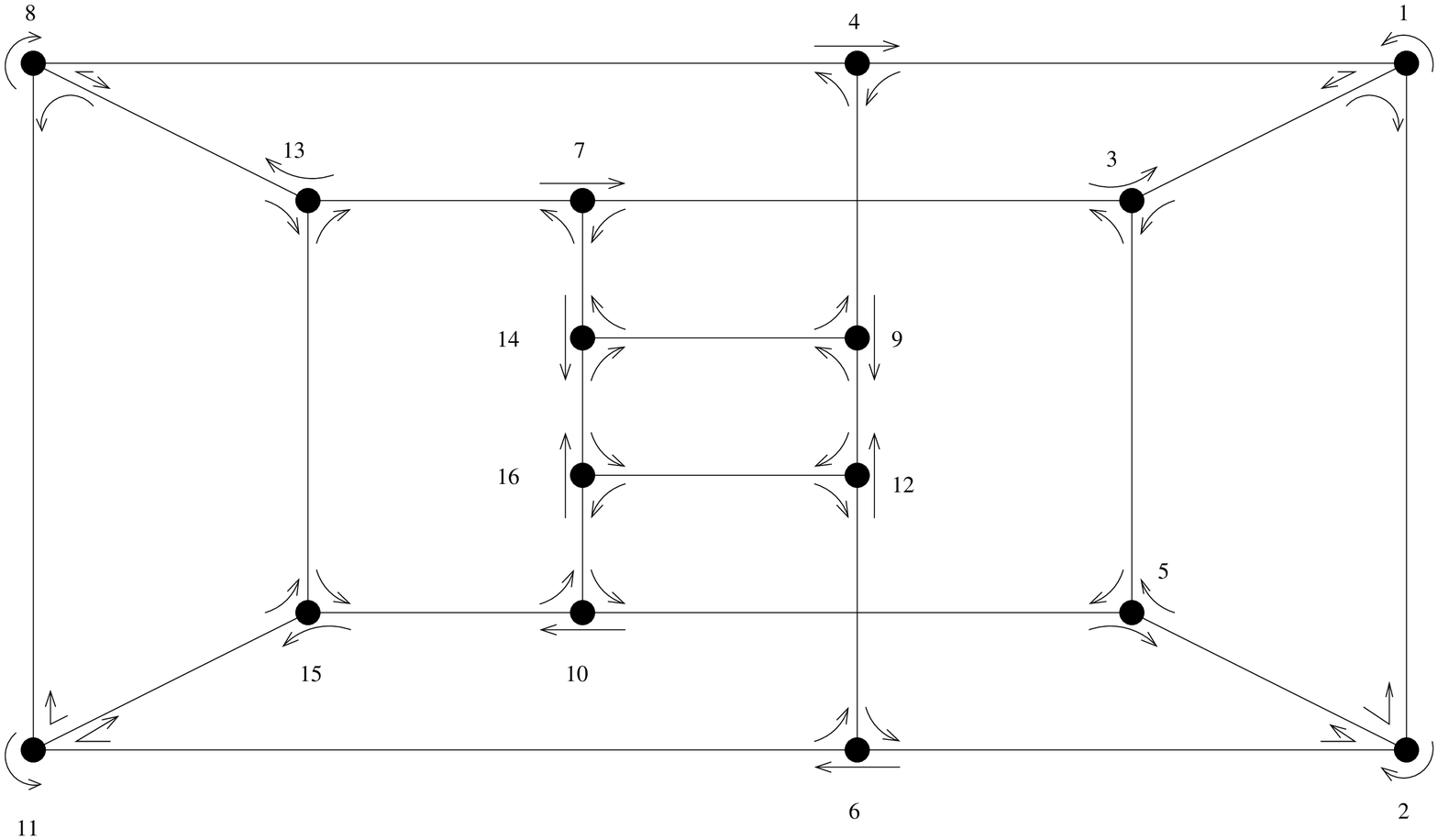}
          \caption{Graph $G^0_{4002}$ with orientations that give six
            8-cycles.}
          \label{graph4002}
        \end{center}
      \end{figure}

      \begin{figure}[h!]
        \begin{center}
          \psfrag{1}{$1$} \psfrag{2}{$2$} \psfrag{3}{$3$}
          \psfrag{4}{$4$} \psfrag{5}{$5$} \psfrag{6}{$6$}
          \psfrag{7}{$7$} \psfrag{8}{$8$} \psfrag{9}{$9$}
          \psfrag{10}{$10$} \psfrag{11}{$11$} \psfrag{12}{$12$}
          \psfrag{13}{$13$} \psfrag{14}{$14$} \psfrag{15}{$15$}
          \psfrag{16}{$16$}
          \includegraphics[width=0.7\textwidth]{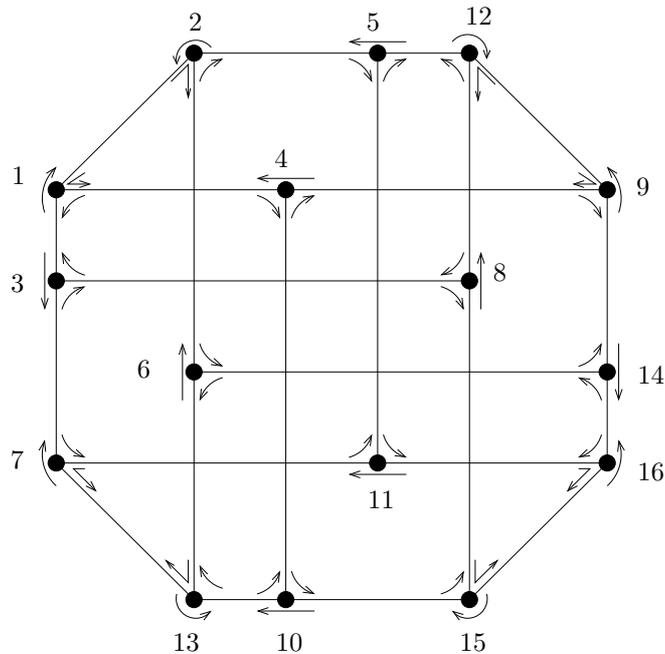}
          \caption{Graph $G^0_{4060}$ with orientations that give six
            8-cycles.}
          \label{graph4060}
        \end{center}
      \end{figure}

      With the help of a another computer programme we then go through
      all the 23 sets of triangles for each of these 5 graphs, and for
      each graph for all the sets of six octagons in it. We look for
      for a labelling of the edges of the dual graphs in such a way
      that around each vertex the labelling of the three edges
      adjacent to it corresponds to a labelling of one of the
      triangles, respecting orientation. Adjacent vertices cannot get
      their labelling from the same triangle, unless the triangle has
      the same label for two edges.

      Let us take for example the graph $G^0_{3621}$ with the 8-cycles
      specified in Table \ref{octagons2} and search for colouring by
      $T_{21}$. There are $15\times 3$ ways to choose the triangle for
      the first vertex and thus the labelling for the three edges
      adjacent to it: 15 triangles to choose from, 3 orientations for
      each one, see Table \ref{presT} for the triangles. Let us pick
      $(x_1,x_5,x_2)$ as in Figure \ref{colour_no}. Now, for vertex 2,
      we have two choices: $(x_1,x_6,x_4)$ or $(x_1,x_{13},x_3)$. Let
      us take the first one. For vertex 3 can use either
      $(x_5,x_9,x_{10})$ or $(x_5,x_{13},x_9)$. If we chose the first
      one, for vertex 5 we would need a triplet with $x_{9}$ followed
      by $x_4$.  If we chose the latter one, we would need a triplet
      with $x_{13}$ followed by $x_6$. However, there are no such
      triangles in $T_{21}$, see Table \ref{presT}. So, our choice for
      the vertex 2 leads us nowhere. We had also another choice for
      vertex 2, namely $(x_1,x_{13},x_3)$. But, again, choosing either
      of our possibilities $(x_5,x_9,x_{10})$ or $(x_5,x_{13},x_9)$
      for vertex 3 makes it impossible to get a triangle for vertex
      5. Thus, the initial choice for the first vertex does not lead
      to any colouring of the graph.

      \begin{figure}[h!]
        \begin{center}
          \psfrag{1}{$1$} \psfrag{2}{$2$} \psfrag{3}{$3$}
          \psfrag{4}{$4$} \psfrag{5}{$5$} \psfrag{6}{$6$}
          \psfrag{?}{?}  \psfrag{x1}{$x_1$} \psfrag{x2}{$x_2$}
          \psfrag{x4}{$x_4$} \psfrag{x5}{$x_5$} \psfrag{x6}{$x_6$}
          \psfrag{x10}{$x_{10}$} \psfrag{x9}{$x_9$} \psfrag{orx9}{or
            $x_{9}$} \psfrag{orx13}{or $x_{13}$}
          \includegraphics[width=0.70\textwidth]{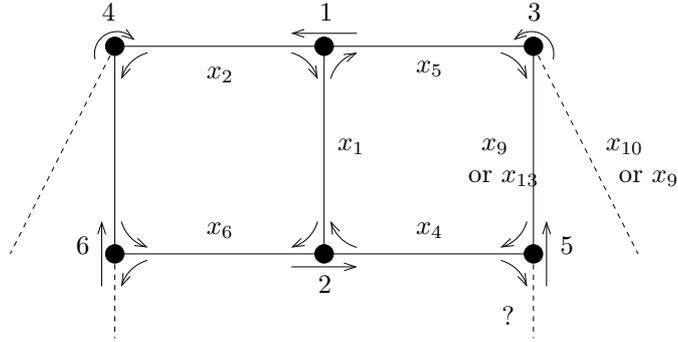}
          \caption{Colouring vertex 1 in the graph $G^0_{3621}$ with
            $(x_1,x_5,x_2)$, 2 with $(x_1,x_6,x_4)$ and 3 with either
            $(x_5,x_9,x_{10})$ or $(x_5,x_{13},x_9)$ leaves no
            possibility to colour vertex 5.}
          \label{colour_no}
        \end{center}
      \end{figure}

      With the computer program we try out all possible choices: we
      try out all 23 sets of triangles for all five candidates for
      dual graphs, for each of them all ways of picking the six
      octagons. In each case we try out all 45 choices for the labels
      around the first vertex. As a result, we find colourings for the
      graph $G^0_{3345}$ by the triangles in the presentations $T_1$
      and $T_2$ listed in \cite{KV2}, but no colouring with any subset
      of the triangles in any of the other presentations $T_3$ --
      $T_{23}$.

      \begin{figure}[ht!]
        \begin{center}
          \psfrag{x1}{$x_1$} \psfrag{x2}{$x_2$} \psfrag{x3}{$x_3$}
          \psfrag{x4}{$x_4$} \psfrag{x9}{$x_9$} \psfrag{x10}{$x_{10}$}
          \psfrag{x11}{$x_{11}$} \psfrag{x13}{$x_{13}$}
          \psfrag{x14}{$x_{14}$} \psfrag{x15}{$x_{15}$}
          \includegraphics[width=0.9\textwidth]{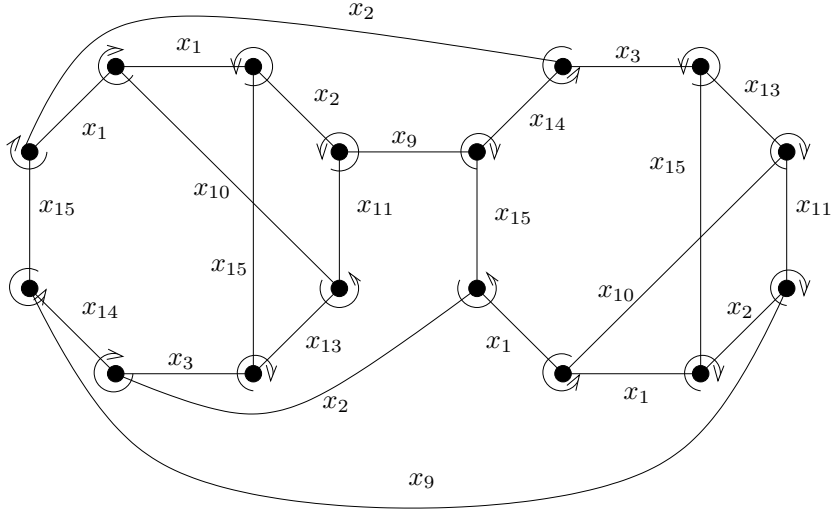}
          \caption{Colouring of the graph $G^0_{3345}$ with triangles from
            $T_1$.}
          \label{colour_yes}
        \end{center}
      \end{figure}

      A colouring of the graph $G^0_{3345}$ with triangles from $T_1$ is
      presented in Figure \ref{colour_yes}. Thus these 16 triangles,
      namely $(x_1,x_1,x_{10})$ at vertices $1$ and $16$ of the graph
      $G^0_{3345}$ (see Table \ref{presT} for the triangles and Figure
      \ref{graph3345} for the labelling of the vertices),
      $(x_1,x_{15},x_2)$ at vertices 2, 4, 13 and 15,
      $(x_2,x_{11},x_9)$ at vertices 5 and 12, $(x_2,x_{14},x_3)$ at
      vertices 7 and 10, $(x_3,x_{15},x_{13})$ at vertices 6 and 11,
      $(x_9,x_{14},x_{15})$ at vertices 8 and 9 and
      $(x_{10},x_{13},x_{11})$ at vertices 3 and 14, give a surface of
      genus 2. Since these 7 triangles used in the colouring are among
      the 15 triangles of the presentation $T_2$ as well, see
      \cite{KV2}, the same periodic plane exists in $T_2$, too. For
      the other four candidates for dual graphs no colourings can be
      found.

      Thus in the buildings defined by the presentations $T_1$ and
      $T_2$ there are periodic planes of genus 2 and therefore surface
      subgroups of the same genus. However, when considering the
      possible dual graphs that do not have multiple edges, there are
      no apartments invariant by genus 2 surface group actions
      associated to the other 21 triangular presentations.

      \subsection{Dual graphs with double edges}

      Let us then consider the surfaces where some of the triangles
      are glued together by two sides. This means that in the dual
      graph there are two edges between some two vertices. There cannot however be adjacent double edges, that would give degree 4
      to the vertex between them. Also seven double edges altogether
      would make the existence of cycles of length 8 impossible. Thus, we will generate and check all possible dual graphs with six or less double edges.
      
      We generate the possible dual graphs with double edges as
      follows: For the dual graphs with $n$ double edges we first
      generate with \texttt{nauty} \cite{nauty} the connected, bipartite graphs
      with 16 vertices and $24-n$ edges, with vertices of degree two and
      three. Then we check whether the vertices of degree two in a
      graph are pairwise adjacent. If they are, we double
      these edges to obtain a 3-valent graph with $n$ double
      edges. See Table \ref{tab:graphnbs} for the numbers of graphs obtained.
      
       \begin{table}[h!]
        \begin{center}
          \begin{tabular}{ | c | c | c | c | p{5cm}|}
            \hline
            Double edges & Graphs & Possible dual graphs \\ \hline
            0 &   38 &  $G^0_{3345}$, $G^0_{3538}$, $G^0_{3621}$, $G^0_{4002}$, $G^0_{4060}$\\
            1 & 86 &  $G^1_{61}$, $G^1_{84}$\\
            2 & 145 & $G^2_{20}$, $G^2_{25}$, $G^2_{78}$, $G^2_{84}$\\
            3 & 132 &  $G^3_{112}$\\
            4 & 75 &  -\\
            5 & 21 &  -\\
            6 & 1 & -\\
            \hline
          \end{tabular}
        \end{center}
        \caption{The amounts of graphs with different numbers of double edges}
        \label{tab:graphnbs}
      \end{table}

     After generating the graphs we run the same depth first
      search as earlier to see, whether the graphs consist of six cycles
      of length eight. We obtain two possible dual graphs with
      one double edge, nine with two double edges, four with three and
      four with four double edges. With more double edges suitable graphs do
      not exist.

      When studying these graphs further, we see that in fact not even
      all of these graphs are possible dual graphs. Namely, the
      8-cycles in the dual graphs arise from orientations given to the
      vertices. Thus, if we have a double edge between two vertices,
      there is only two possibilities how a cycle can go through the
      vertices. When the graph is drawn on a plane, either both the vertices are
      oriented to the same direction as in Figure \ref{double_edge1},
      or, they have opposite orientations, as in Figure
      \ref{double_edge2}. In the first case a cycle coming in from
      vertex $C$ towards vertex $A$ would continue to vertex $B$ and
      $D$ and then further. Similarly for a path coming from $D$
      towards $B$. With these two paths the edges $AC$ and $BD$ are
      travelled twice, but the edges between $A$ and $B$ are
      not. Thus, the vertices joined by a double edge must have
      opposite orientations to allow all edges to be travelled twice. 

      \begin{figure}[ht!]
        \psfrag{A}{$A$} \psfrag{B}{$B$} \psfrag{C}{$C$}
        \psfrag{D}{$D$} \centering
        \begin{subfigure}[b]{0.45\textwidth}
          \includegraphics[width=\textwidth]{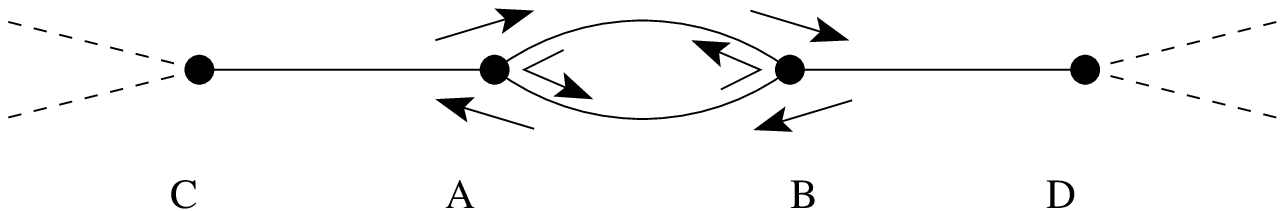}
          \caption{Vertices with same orientation}
          \label{double_edge1}
        \end{subfigure}
        ~ 
        \begin{subfigure}[b]{0.45\textwidth}
          \includegraphics[width=\textwidth]{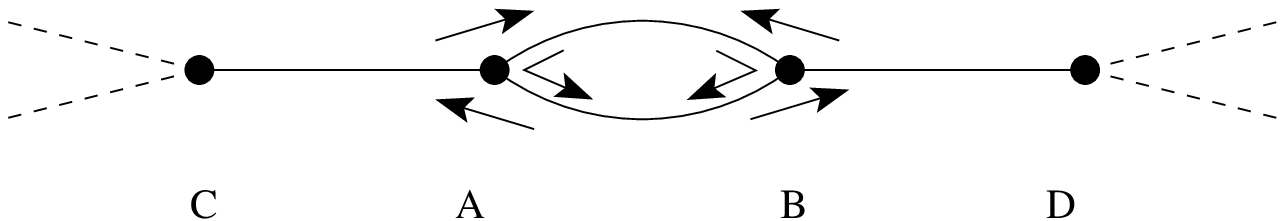}
          \caption{Vertices with opposite orientations}
          \label{double_edge2}
        \end{subfigure}
        \caption{Possible orientations of vertices joined by a double
          edge}\label{double_edge}
      \end{figure}

      Now, when the vertices joined by a double edge have opposite
      orientations, as in Figure \ref{double_edge2}, we immediately
      have some additional information about the 8-cycles. Namely, a
      path from $C$ towards $A$ continues to $B$ and then back to
      $A$. Thus, to get a cycle, this paths needs to continue with a
      cycle of four edges from $C$. Similarly for a path from $D$
      towards $B$.  In half of the graphs that are found to have six
      8-cycles with the depth first search there are no such cycles of
      length four available, and thus they are not suitable for dual
      graphs. Let us call the graphs we have left by $G^d_n$, where
      $d$ is the number of double edges, and $n$ is the number of the
      graph in the list of graphs generated by \texttt{nauty}. Thus we have the
      two graphs with one double edge, $G^1_{61}$ and $G^1_{84}$, four
      graphs with two double edges, $G^2_{20}$, $G^2_{25}$, $G^2_{78}$
      and $G^2_{84}$, and one graph $G^3_{112}$ with three double
      edges, see Table \ref{tab:graphnbs}. The graphs are presented in Figures \ref{Gd1}, \ref{Gd2} and
      \ref{Gd3_112}.

\begin{figure}[ht!]
  \centering \psfrag{1}{$1$} \psfrag{2}{$2$} \psfrag{3}{$3$}
  \psfrag{4}{$4$} \psfrag{5}{$5$} \psfrag{6}{$6$} \psfrag{7}{$7$}
  \psfrag{8}{$8$} \psfrag{9}{$9$} \psfrag{10}{$10$} \psfrag{11}{$11$}
  \psfrag{12}{$12$} \psfrag{13}{$13$} \psfrag{14}{$14$}
  \psfrag{15}{$15$} \psfrag{16}{$16$}
   \begin{subfigure}[b]{0.43\textwidth}
       \includegraphics[width=\textwidth]{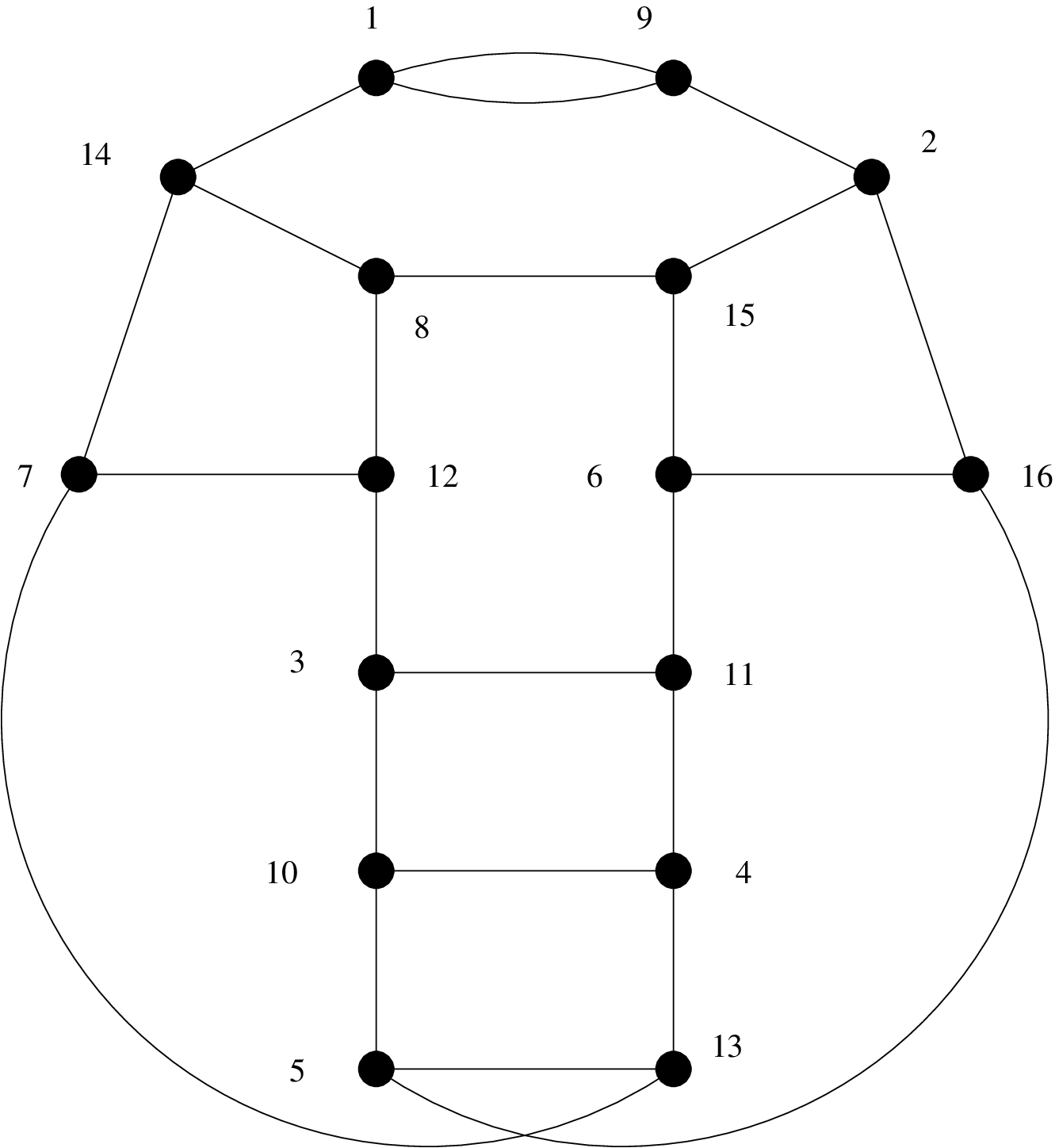}
       \caption{Graph $G^1_{61}$}
   \end{subfigure}
   ~ 
   \begin{subfigure}[b]{0.45\textwidth}
       \includegraphics[width=\textwidth]{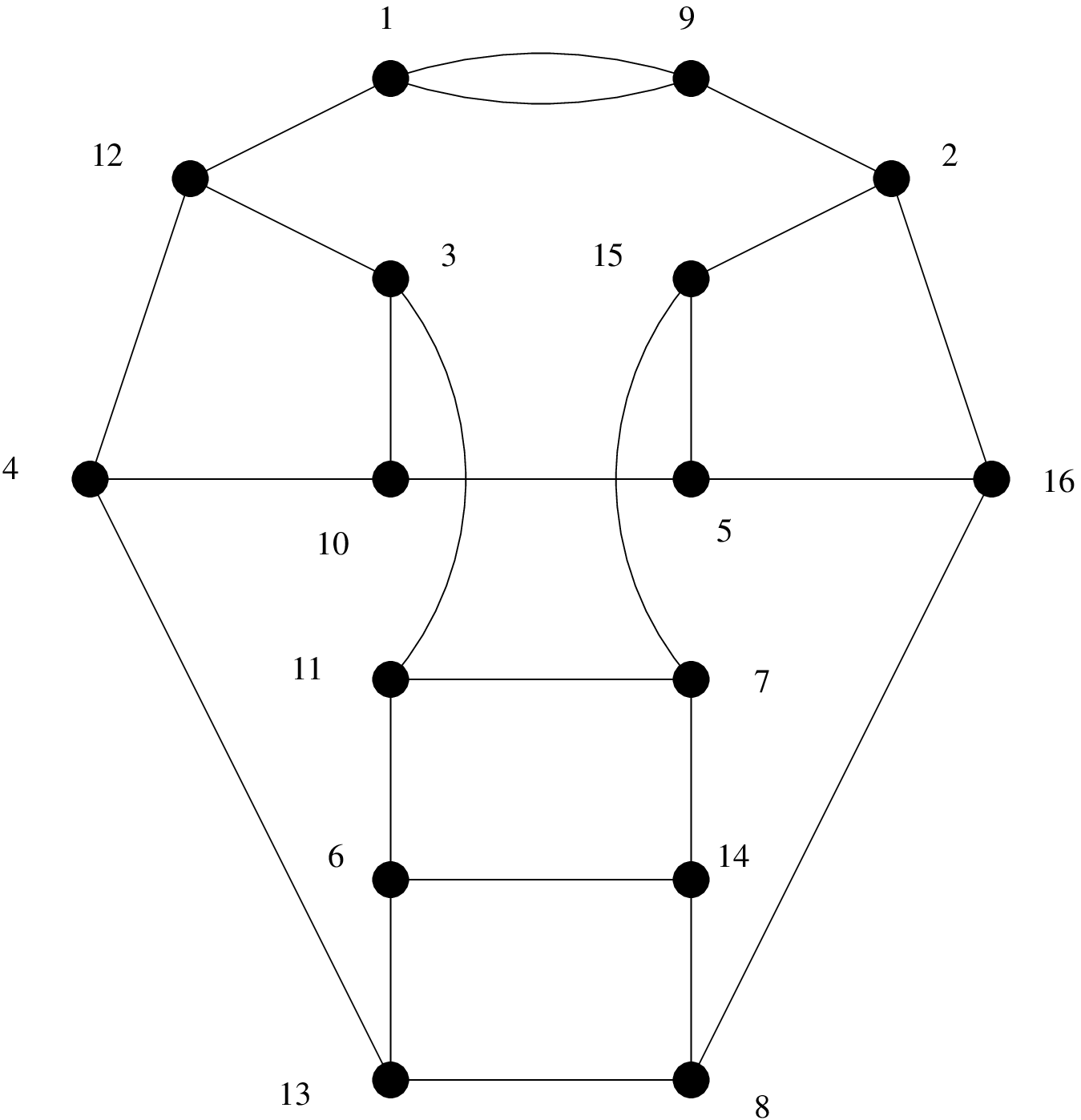}
       \caption{Graph $G^1_{84}$ }
   \end{subfigure}
   \caption{Graphs with one double edge}\label{Gd1}
\end{figure}

\begin{figure}[ht!]
  \centering \psfrag{1}{$1$} \psfrag{2}{$2$} \psfrag{3}{$3$}
  \psfrag{4}{$4$} \psfrag{5}{$5$} \psfrag{6}{$6$} \psfrag{7}{$7$}
  \psfrag{8}{$8$} \psfrag{9}{$9$} \psfrag{10}{$10$} \psfrag{11}{$11$}
  \psfrag{12}{$12$} \psfrag{13}{$13$} \psfrag{14}{$14$}
  \psfrag{15}{$15$} \psfrag{16}{$16$}
   \begin{subfigure}[b]{0.43\textwidth}
       \includegraphics[width=\textwidth]{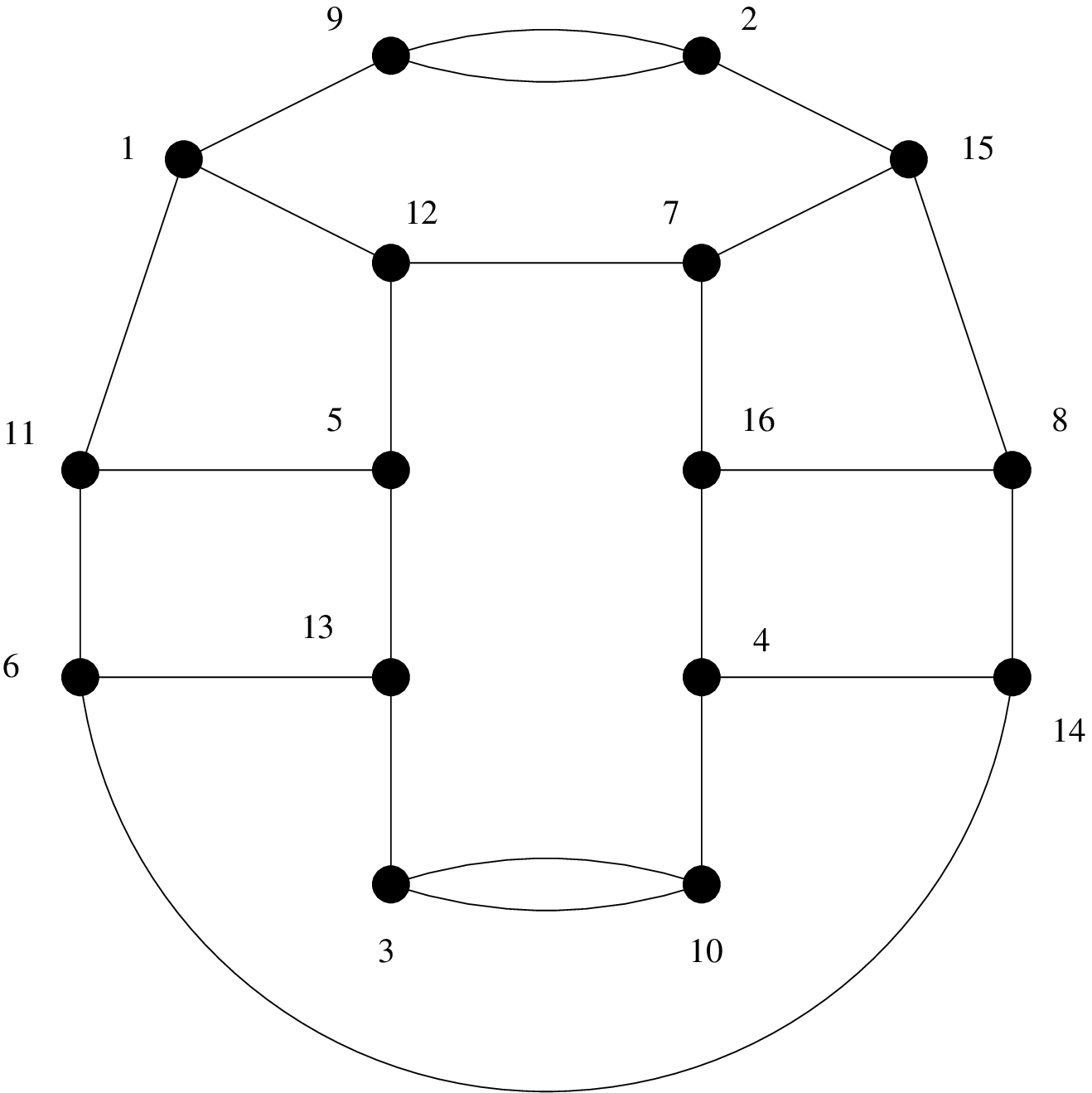}
       \caption{Graph $G^2_{20}$}
   \end{subfigure}
   \quad 
   \begin{subfigure}[b]{0.45\textwidth}
       \includegraphics[width=\textwidth]{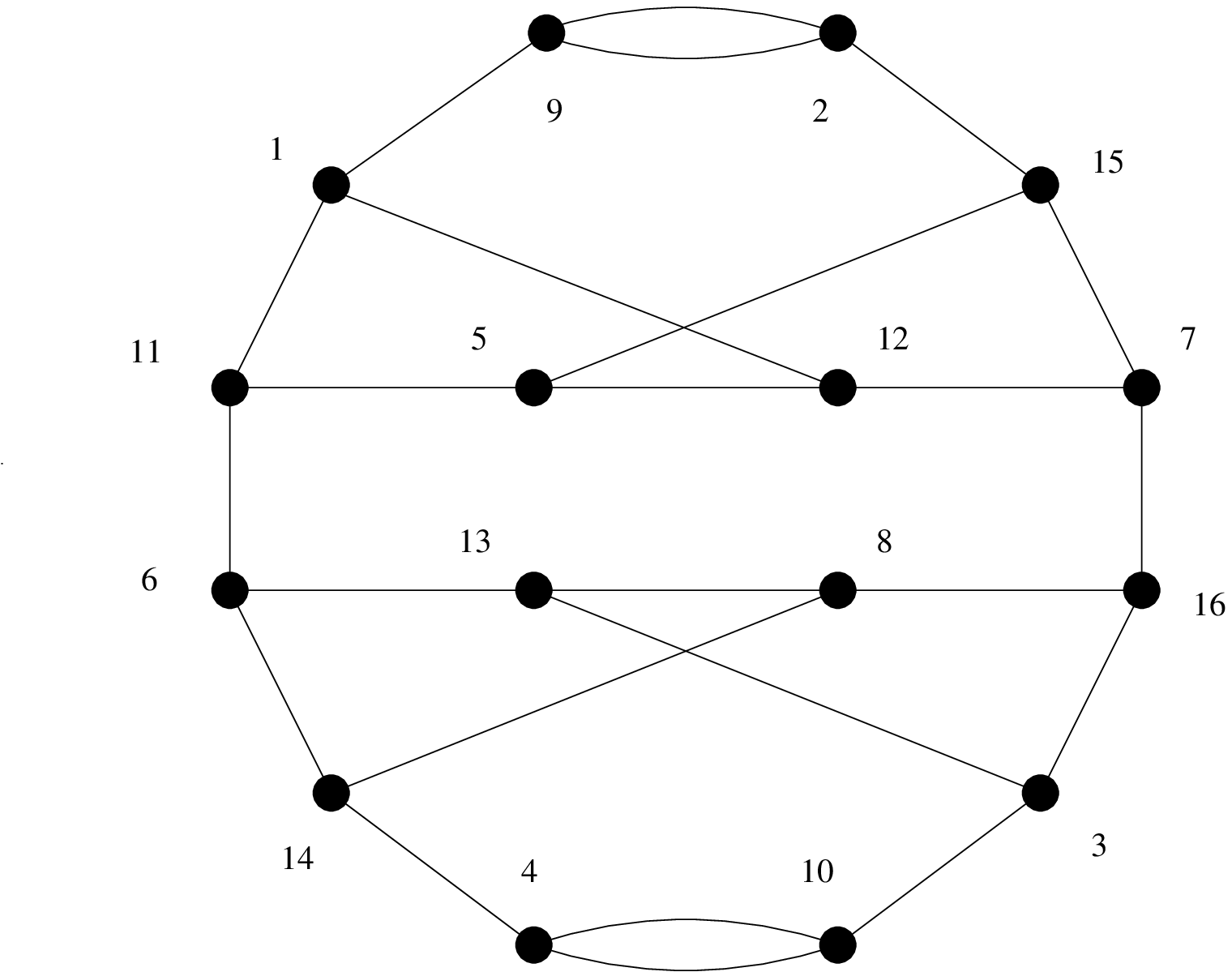}
       \caption{Graph $G^2_{25}$ }
   \end{subfigure}\\
   \bigskip
      \begin{subfigure}[b]{0.45\textwidth}
       \includegraphics[width=\textwidth]{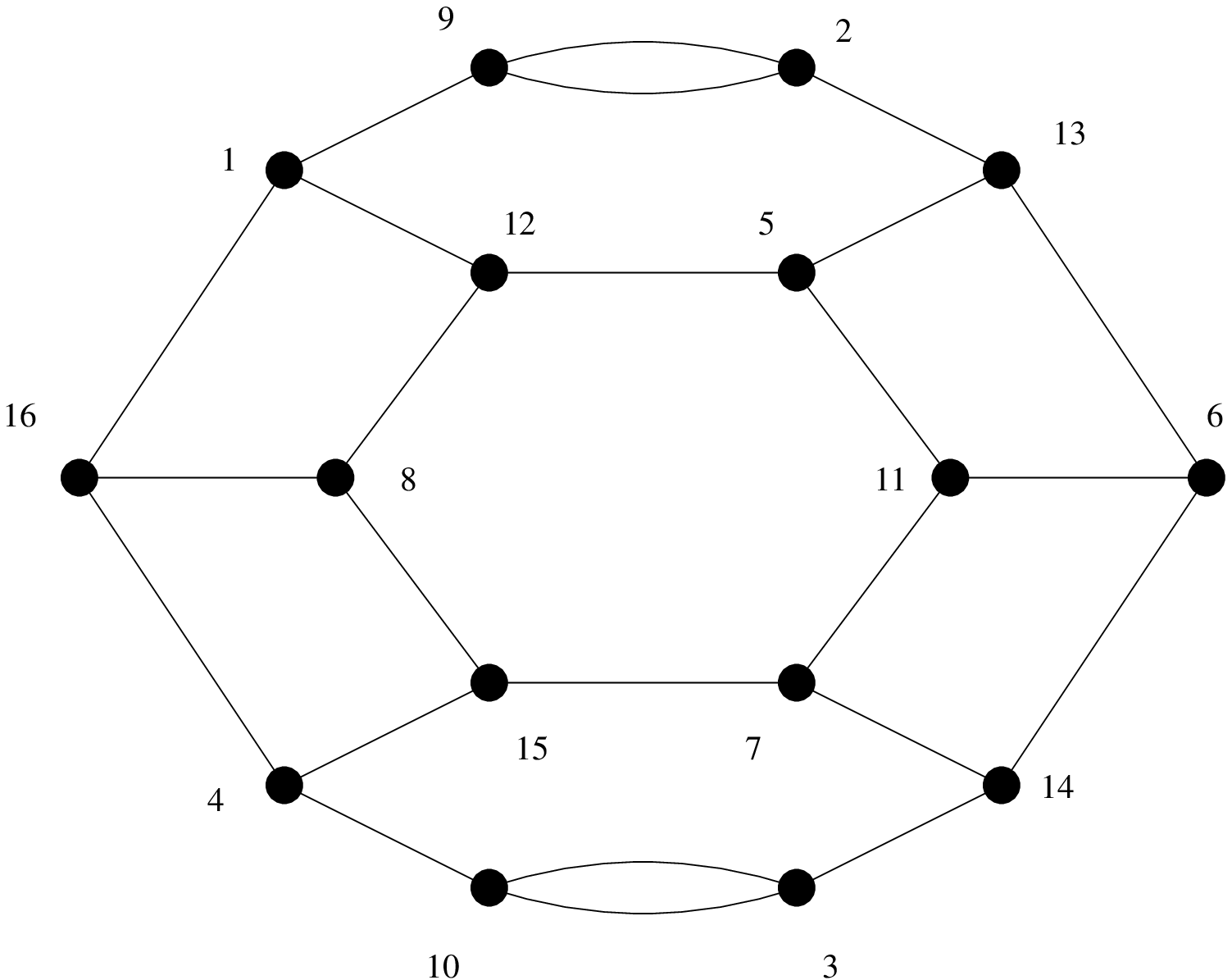}
       \caption{Graph $G^2_{78}$}
   \end{subfigure}
   \quad
   \begin{subfigure}[b]{0.42\textwidth}
       \includegraphics[width=\textwidth]{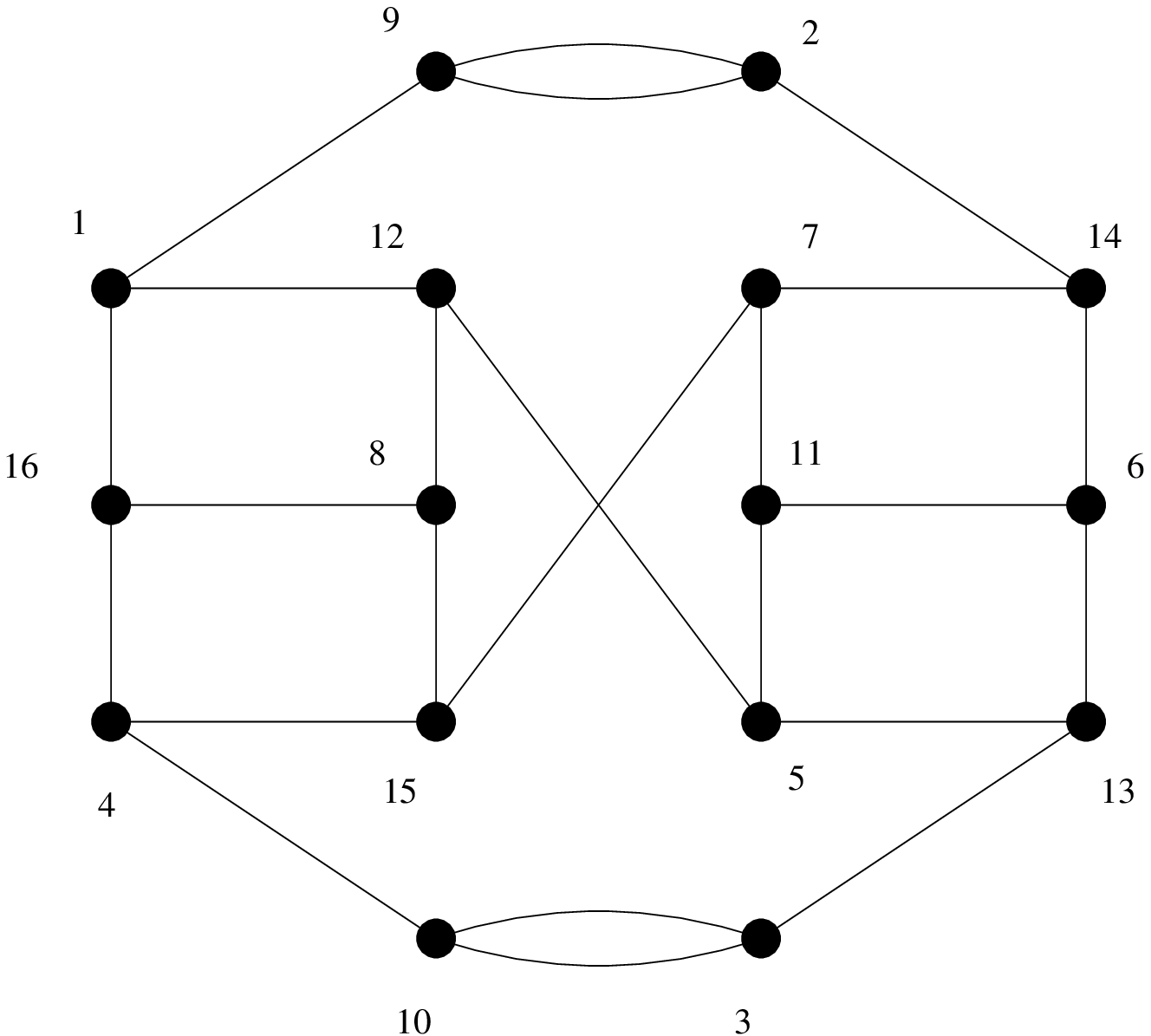}
       \caption{Graph $G^2_{84}$ }
   \end{subfigure}
   \caption{Graphs with two double edges}\label{Gd2}
\end{figure}

 \begin{figure}[ht!]
    \begin{center}
      \psfrag{1}{$1$} \psfrag{2}{$2$} \psfrag{3}{$3$} \psfrag{4}{$4$}
      \psfrag{5}{$5$} \psfrag{6}{$6$} \psfrag{7}{$7$} \psfrag{8}{$8$}
      \psfrag{9}{$9$} \psfrag{10}{$10$} \psfrag{11}{$11$}
      \psfrag{12}{$12$} \psfrag{13}{$13$} \psfrag{14}{$14$}
      \psfrag{15}{$15$} \psfrag{16}{$16$}
  \includegraphics[width=0.65\textwidth]{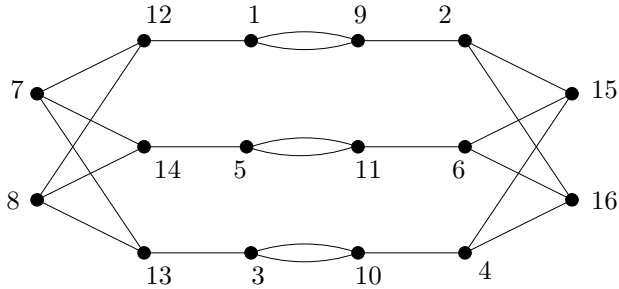}
      \caption{Graph $G3_{112}$ with three double edges.}
      \label{Gd3_112}
    \end{center}
  \end{figure}

  All of these seven graphs have several possible ways to pick the set of
  six 8-cycles. We check by computer for each graph all $2^{16}$
  possibilities to orient the vertices in order to see which
  orientations produce a set of six 8-cycles. As a result we have 8
  different sets of six 8-cycles for $G^1_{61}$, 24 for $G^1_{84}$, 64
  for $G^2_{20}$ and $G^2_{25}$, 32 for $G^2_{78}$ and $G^2_{84}$, and
  1024 for the graph $G^3_{112}$.

  Then we search for colourings of the vertices of these seven
  multigraphs with triangles from the 23 groups in \cite{KV2}, giving to the three
  edges meeting at a vertex labels from the three sides of one of the
  triangles. The search is otherwise similar to earlier, but this time
  we first color the graphs without paying attention to orientations
  of the vertices and triangles. Only after receiving a candidate for
  possible coloring we then check, whether the coloring corresponds to
  one of the orientations at the vertices that gives six 8-cycles in
  the graph.

  We obtain colourings for the graphs $G^2_{78}$, $G^2_{84}$ and
  $G^3_{112}$, the other four multigraphs do no admit any
  colourings. The graphs $G^2_{78}$ and $G^2_{84}$ can be coloured
  with the triangles from the group $T_{18}$ in \cite{KV2}. The graph
  $G^3_{112}$ can be coloured with triangles from the groups $T_1$,
  $T_7$ or $T_9$. We already knew that for $T_1$ a periodic
  apartment exists in the corresponding building. However, this
  colouring of $G^3_{112}$ by $T_1$ uses different triangles than the colouring obtained for $G^0_{3345}$, and 
  this periodic apartment is not found in the building corresponding
  to $T_2$. The new cases of periodic apartments with dual
  graph $G^3_{112}$ are in the buildings given by $T_7$ and
  $T_9$. The obtained colourings of $G^2_{84}$ by $T_{18}$ and of
  $G^3_{112}$ by $T_7$ and $T_9$ are presented in the Table
  \ref{tableT7T9}: on line $i$ is the triangle that colours vertex $i$
  of the corresponding graph, when the vertices are numbered as in the Figures \ref{Gd2} and
      \ref{Gd3_112}. The labels on the sides of the cyclic
  triangle are given in the order matching to the ascending order of
  the labels of the vertices adjacent to $i$.

      \begin{table}[h!]
        \centering
        \begin{tabular}{| l | l | l |}
          \hline
          Colouring with $T_7$ & Colouring with $T_9$ & Colouring with $T_{18}$\\
          \hline
          $(x_{7},x_{3},x_{4})$&$(x_{14}, x_{14}, x_{4})$ &$(x_{15}, x_{13}, x_{5})$\\
          $(x_{6},x_{9},x_{11})$&$(x_{4}, x_{8}, x_{5})$ &$(x_{1}, x_{1}, x_{15})$\\
          $(x_{7},x_{3},x_{6})$&$(x_{15}, x_{9}, x_{6})$ &$(x_{1}, x_{1}, x_{15})$\\
          $(x_{4},x_{15},x_{13})$&$(x_{13}, x_{3}, x_{10})$ &$(x_{15}, x_{10}, x_{12})$\\
          $(x_{12},x_{12},x_{3})$&$(x_{15}, x_{9}, x_{13})$ &$(x_{7}, x_{11}, x_{10})$\\
          $(x_{3},x_{7},x_{6})$&$(x_{6}, x_{7}, x_{12})$ &$(x_{6}, x_{12}, x_{5})$\\
          $(x_{13},x_{11},x_{6})$&$(x_{8}, x_{7}, x_{3})$ &$(x_{9}, x_{13}, x_{11})$\\
          $(x_{15},x_{9},x_{7})$&$(x_{5}, x_{12}, x_{10})$ &$(x_{9}, x_{7}, x_{6})$\\
          $(x_{7},x_{3},x_{6})$&$(x_{14}, x_{14}, x_{4})$ &$(x_{15}, x_{1}, x_{1})$\\
          $(x_{7},x_{3},x_{4})$&$(x_{15}, x_{9}, x_{13})$ &$(x_{1}, x_{1}, x_{15})$\\
          $(x_{12},x_{12},x_{3})$&$(x_{15}, x_{9}, x_{6})$ &$(x_{7}, x_{6}, x_{9})$\\
          $(x_{4},x_{13},x_{15})$&$(x_{4}, x_{8}, x_{5})$ &$(x_{13}, x_{11}, x_{9})$\\
          $(x_{6},x_{11},x_{9})$&$(x_{6}, x_{7}, x_{12})$ &$(x_{15}, x_{10}, x_{12})$\\
          $(x_{3},x_{6},x_{7})$&$(x_{13}, x_{3}, x_{10})$ &$(x_{15}, x_{5}, x_{13})$\\
          $(x_{9},x_{15},x_{7})$&$(x_{8}, x_{3}, x_{7})$ &$(x_{10}, x_{11}, x_{7})$\\
          $(x_{11},x_{13},x_{6})$&$(x_{5}, x_{10}, x_{12})$ &$(x_{5}, x_{12}, x_{6})$\\
          \hline
        \end{tabular}
        \caption{Colours for the vertices of $G^3_{112}$ from presentations $T_7$ and $T_9$  and for the vertices of $G^2_{84}$ from $T_{18}$ in \cite{KV2}.}
        \label{tableT7T9}
      \end{table}

      We have now went through all possible colourings for all possible
      dual graphs both with and without multiple edges. As a result we
      found periodic apartments of genus 2 and therefore surface
      subgroups in the buildings given by $T_1$, $T_2$, $T_7$, $T_9$ and
      $T_{18}$ in \cite{KV2}. For the other 18 cases presented in
      \cite{KV2} no periodic apartment invariant under a genus 2
      action exists. This ends the proof of Theorem \ref{thm_1}.
    \end{proof}

\begin{remark}
  From \eqref{faces} we note immediately that surfaces of genus 0 or 1 are
  impossible.
\end{remark}

\begin{remark}
  The existence of periodic apartments of genus 3 could in theory be
  checked the same way. For them the possible dual graph would have to
  be bipartite trivalent graphs with 32 vertices, 48 edges and 12
  cycles of length 8. However, since there are already 18941522184590
  trivalent graphs with 32 vertices without double edges
  \cite{Gordon_web}, the calculation time to find the possible graphs
  and to search colourings with the current algorithms would be too long.
\end{remark}

\section*{Acknowledgements}
The authors would like to thank Tim Steger for useful
discussions. This article was finalized in July 2015 when the authors
were invited to work at Max Planck Institute for Mathematics in
Bonn. We wish to thank MPIM for their hospitality. We also acknowledge the support of the 
EPSRC grant EP/K016687/1.

\end{document}